\documentclass[letterpaper,12pt]{amsart}

\textwidth=16.00cm 
\textheight=22.00cm 
\topmargin=0.00cm
\oddsidemargin=0.00cm 
\evensidemargin=0.00cm 
\headheight=0cm 
\headsep=0.5cm

\title{Strongly Robust Toric Ideals in Codimension 2}

\author{Seth Sullivant}
\address{Department of Mathematics \\ North Carolina State University, Raleigh, NC 27695}
\email{smsulli2@ncsu.edu}

\textheight=610pt

\usepackage{latexsym,array,delarray,amsthm,amssymb,epsfig,hyperref}
\usepackage{tikz}
\usepackage{verbatim}
\usepackage{graphicx}


\theoremstyle{plain}
\newtheorem{thm}{Theorem}[section]
\newtheorem{lemma}[thm]{Lemma}
\newtheorem{prop}[thm]{Proposition}
\newtheorem{cor}[thm]{Corollary}

\newtheorem*{thm*}{Theorem}
\newtheorem*{lemma*}{Lemma}
\newtheorem*{prop*}{Proposition}
\newtheorem*{cor*}{Corollary}
\newtheorem*{conj*}{Conjecture}

\theoremstyle{definition}
\newtheorem{defn}[thm]{Definition}
\newtheorem*{defn*}{Definition}
\newtheorem{ex}[thm]{Example}
\newtheorem{pr}[thm]{Problem}

\newtheorem{ques}[thm]{Question}

\theoremstyle{remark}

\newcommand{\zz}{\mathbb{Z}}
\newcommand{\nn}{\mathbb{N}}
\newcommand{\pp}{\mathbb{P}}
\newcommand{\qq}{\mathbb{Q}}

\newcommand{\kk}{\mathbb{K}}



\newcommand{\calf}{\mathcal{F}}
\newcommand{\calg}{\mathcal{G}}
\newcommand{\calh}{\mathcal{H}}

\newcommand{\cals}{\mathcal{S}}


\newcommand{\conv}{\mathrm{conv}}
\newcommand{\ind}{\mbox{$\perp \kern-5.5pt \perp$}}

\newcommand{\rank}{\textnormal{rank}}

\newcommand{\supp}{\textnormal{supp}}

\tikzstyle{vertex}=[circle, draw, inner sep=0pt, minimum size=6pt, fill=black]

\begin{document}

\begin{abstract}
A homogeneous ideal is robust if its universal Gr\"obner basis is also a 
minimal generating set.  For toric ideals, one has the stronger definition: A toric ideal is strongly robust
 if its
Graver basis equals the set of indispensable binomials. 
 We characterize the codimension 2  strongly robust toric
ideals by their Gale diagrams.  This give a positive answer to a question
of Petrovic, Thoma, and Vladoiu in the case of codimension 2 toric ideals.
\end{abstract}

\maketitle


\section{Introduction}\label{introduction}

A homogeneous ideal is robust if its universal Gr\"obner basis is also a 
minimal generating set.  Although one typically expects the universal 
Gr\"obner basis to be much larger than a minimal generating set (and hence
most ideals are far from robust), there
are a surprising number of examples of ideals that are robust.
Usually these examples have rich underlying combinatorics.
Three well-known example are: the ideals of maximal minors of generic matrices
of indeterminates
\cite{BernsteinZelevinsky1993,SturmfelsZelevinsky1993},  the vanishing ideal of the
closure of an affine linear space in $(\pp^1)^n$ \cite{Ardila2016}, and  toric ideals
of Lawrence type (see \cite[Chapter 7]{Sturmfels1996}).

Let $A \in \zz^{d \times n}$ be an integer matrix of rank $d$, 
and $\kk[p] := \kk[p_1, \ldots, p_n]$
the polynomial ring in $n$ indeterminates.  The toric ideal associated to the
matrix $A$ is the binomial ideal
\[
I_A =  \langle  p^u  -  p^v  :  u,v \in \nn^n,  Au  = Av  \rangle.
\] 
Properties of the generating set of $I_A$ and the geometry of the corresponding
variety are determined by combinatorial properties of the matrix $A$, and many
conditions can be expressed in terms of linear algebra over the integers.
Boocher and Robeva \cite{Boocher2015} initiated a systematic study
of robustness of toric ideals and introduced the word ``robust''.   
They showed that a set of quadratic binomials generate a robust ideal
if and only if it is the direct sum of ideals of maximal minors of  $2 \times n_i$ generic matrices
on disjoint sets of variables.  Since these ideals are toric ideals of Lawrence
type, one wonders if all robust toric ideals must be of Lawrence type.
Petrovic, Thoma, and Vladoiu \cite{Petrovic2015} studied this problem
by introducing an oriented matroid concept they call ``bouquets'', which we
explain below.  They also introduced a strengthening of robust
for toric ideals, which they called $\emptyset$-Lawrence, and we
call strongly robust, that involves looking at a superset of the 
universal Gr\"obner basis called the Graver basis (explained in Section \ref{sec:2}).

One motivation for studying strongly robust toric ideals comes from algebraic
statistics.  Recall that the generating set of a toric ideal is called a Markov
basis.  This is because the binomial generators can be used as a set of moves
to perform a random walk on the fiber $\calf(u) = \{ v \in \nn^n : Au = Av \}$ (see
\cite{Diaconis1998}).  
While any binomial generating set of the toric ideal can be used to generate the associated 
Markov chain, Markov bases that make rapid connections between elements of all fibers should
be preferred since our intuition tells us that these Markov chains will mix more rapidly.  
One desirable property of a Markov basis that guarantees short connections it the
distance-reducing property \cite{Takemura2005}.  Since Graver bases always satisfy the
distance-reducing property, strongly robust toric ideals have the pleasing property that
every Markov basis is distance-reducing.  This suggests  that strongly robust toric ideals
should have nice properties from the standpoint of mixing times of the associated
Markov chain.

Associated to the matrix $A$ is the Gale transform $B$ which is a
$n \times {n-d}$ integer matrix whose columns span $\ker_\zz A$.  
When describing the matrix $A$, we often think about it as a list of
column vectors $A = \{a_1, a_2, \ldots, a_n \}$.  
When describing the Gale transform we think about it as a list of row
vectors $ B = \{b_1, b_2, \ldots, b_n \}$.  A \emph{bouquet} is a maximal subset $S \subseteq [n]$
such that ${\rm span}(b_s : s \in S)$ in one-dimensional.  
A bouquet $S$ is \emph{mixed} if not all elements $\{b_s : s \in S\}$
lie in the same orthant. In the language of matroid theory, 
bouquets correspond to rank one flats of the dual matroid associated to $A$.

A key observation of \cite{Petrovic2015} is that the toric ideals of Lawrence type have many
mixed bouquets.  Recall that if $A \in \zz^{d \times n}$, the \emph{Lawrence lifting}
of $A$ is the matrix
\[
\Lambda(A)  =  \begin{pmatrix}
A  &  0  \\
I  & I  \end{pmatrix}   \in \zz^{(d + n) \times 2n }
\]
where $I$ denotes an $n \times n$ identity matrix.
A toric ideal $I_C$ is said to be of Lawrence type if it is equal to $I_{\Lambda(A)}$
for some matrix $A$, perhaps after permuting the indeterminates.  
Note that 
\[
\ker_\zz  \Lambda(A)  =   \{  (u, -u)  \in \zz^{2n}  :  u \in \ker_\zz A \}.
\]
This means that for a toric ideal of Lawrence type, every $s \in [2n]$ belongs to
a mixed bouquet.   Petrovic, Thoma, and Vladoiu also show how to use the bouquet
structure to produce new examples of strongly robust toric ideals that are not of
Lawrence type, and they posed the following question about strongly robust toric ideals.

\begin{ques}\label{ques:big}
If $I_A$ is a strongly robust toric ideal, must $A$ have a mixed bouquet?
\end{ques}

If $A \in \zz^{d \times n}$ is a toric ideal, with $d = \rank A$, then
the codimension of $I_A$ is $n - d$.  When the codimension of
$I_A$ is one, in which case $I_A$ is a principal ideal,
Question \ref{ques:big} is trivial since $A$ consists of a single bouquet
that must be mixed if $I_A$ is positively graded.
We also provide a positive answer to Question \ref{ques:big}
in the case that $I_A$ has codimension $2$ by giving a complete
characterization of the strongly robust codimension $2$ toric ideals
in terms of the Gale transform,
which is described in following sections.  One consequence is the following:

\begin{thm}\label{thm:main}
Let $A \in \zz^{(n-2) \times n}$ be a full rank matrix, and $\tilde{B} = \{ b_1, \ldots, b_n\} \subseteq \zz^2$
be the reduced Gale transform of $A$. If $I_A$ is a strongly robust toric ideal
then $\conv(\tilde{B})$ is a centrally symmetric polygon.
\end{thm}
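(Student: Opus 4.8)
The plan is to prove the contrapositive: if $\conv(\tilde{B})$ is not centrally symmetric, then $I_A$ is not strongly robust. Since $I_A$ is strongly robust precisely when its Graver basis equals its set of indispensable binomials, it suffices to produce a single Graver basis element that is not indispensable. For this I would use the standard criterion (to be recorded in Section~\ref{sec:2}): a primitive $u \in \ker_\zz A$ gives an indispensable binomial if and only if there is no $v \in \ker_\zz A \setminus \{0,u\}$ with $v^+ \le u^+$ coordinatewise. The second ingredient is the reformulation in the plane. Since $\rank \ker_\zz A = 2$, after the normalization built into the reduced Gale transform one identifies elements of $\ker_\zz A$ with integer vectors $w$ via $u_w := (\langle b_1,w\rangle, \dots, \langle b_n,w\rangle)$, so that $\supp(u_w) = \{i : b_i \notin w^\perp\}$ and the sign of the $i$th coordinate of $u_w$ is $\operatorname{sign}\langle b_i,w\rangle$. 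Writing $W(w',w'') := \{x \in \rr^2 : \langle x,w'\rangle > 0,\ \langle x,w''\rangle < 0\}$ for the open wedge cut out by two vectors, the two facts above translate to: $u_w$ is a Graver basis element iff $w$ is primitive and no decomposition $w = w_1 + w_2$ (with $w_1,w_2 \neq 0$) makes \emph{both} $W(w_1,w_2)$ and $W(w_2,w_1)$ miss $\tilde{B}$; and $u_w$ is indispensable iff \emph{every} decomposition $w = w' + w''$ (with $w',w'' \neq 0$) has $W(w',w'') \cap \tilde{B} \neq \emptyset$. Finally, $I_A$ positively graded is equivalent to $0 \in \operatorname{int}\conv(\tilde{B})$, so every open half-plane through the origin meets $\tilde{B}$, consecutive directions among the $b_i$ are less than $\pi$ apart, and any empty wedge is a genuine sector of angle $<\pi$ sitting inside an angular gap of the set of directions of the $b_i$.

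Granting this dictionary, assume $\conv(\tilde{B})$ is not centrally symmetric; then its vertex set is not stable under $x \mapsto -x$, so there is a vertex, say $b_1$, with $-b_1$ not a vertex. I would then single out an angular gap $G$ of the set of directions of the $b_i$ lying in the antipodal region of $b_1$ — chosen so that the arc antipodal to $G$ contains the direction of $b_1$ (hence is not a gap). Taking $w$ to be the primitive vector perpendicular to the bisector of $G$ and pointing away from $G$, the element $u_w$ comes equipped with a decomposition $w = w' + w''$ for which $W(w',w'') = G$, which is empty, while the opposite wedge $W(w'',w')$ is (up to orientation) the arc antipodal to $G$ and so contains $b_1$. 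The first fact shows $u_w$ is not indispensable; the second, combined with $0 \in \operatorname{int}\conv(\tilde{B})$ and the extremality of $b_1$, is used to exclude the existence of any decomposition of $w$ with both wedges empty, i.e.\ to verify that $u_w$ — after replacing $w$, if necessary, by the innermost primitive lattice vector realizing the same sign pattern — is genuinely a Graver basis element. This exhibits a Graver basis element that is not indispensable, contradicting strong robustness; hence $\conv(\tilde{B})$ must be centrally symmetric.

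The heart of the matter, and the step I expect to be the main obstacle, is the verification in the last paragraph: one must choose $w$ so that $u_w$ is \emph{simultaneously} primitive (a Graver element) and admits the empty-wedge decomposition, and this must be done uniformly over all configurations. Two further technicalities need care. First, bouquets: a vertex of $\conv(\tilde{B})$ may be occupied by several parallel $b_j$, and ``$-b_1$ is not a vertex'' must then be read as the entire antipodal bouquet being absent or non-extreme; the choice of $G$ and the primitivity check have to accommodate this (and the possibility that the direction of $-b_1$ coincides with that of some $b_j$ while $-b_1$ is still not extreme). Second, the lattice subtlety that $\ker_\zz A$ may properly contain $\{u_w : w \in \zz^2\}$, which the ``reduced'' normalization — or, failing that, working in the lattice $\{w : u_w \in \zz^n\}$ — is meant to absorb; the wedge criteria should be applied over that lattice. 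One should also double-check that the indispensability criterion invoked from Section~\ref{sec:2} has exactly the stated form.

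Of course, all of this is subsumed by the complete classification of strongly robust codimension~$2$ toric ideals established in the later sections, from which Theorem~\ref{thm:main} is immediate; the argument above is essentially the ``only if'' half of that classification restricted to what is needed to force central symmetry of $\conv(\tilde{B})$.
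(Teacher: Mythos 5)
Your dictionary is sound: the wedge criteria you write down for primitivity and indispensability are correct translations of the definitions into the Gale plane, and they amount to an elementary version of what the paper extracts from the Lawrence lifting (Theorem~\ref{thm:Lawrence} via Corollary~\ref{cor:graverchar}) and from Peeva--Sturmfels (Theorem~\ref{thm:Peeva}): a kernel vector $u_w$ is primitive iff $w$ or $-w$ lies in one of the Hilbert bases $H_i$, and indispensable iff both do. The genuine gap is exactly where you flag it: the witness is never actually produced. Your candidate $w$ (perpendicular to the bisector of an angular gap $G$ antipodal to $b_1$) is engineered so that one decomposition $w=w'+w''$ has $W(w',w'')=G$ empty, but nothing in the sketch shows this $w$ yields a \emph{primitive} kernel element, and the proposed repair --- replacing $w$ by the innermost primitive lattice vector with the same sign pattern --- can destroy the very decomposition you need: whether that shorter vector $w_0$ admits a decomposition into nonzero \emph{lattice} parts $w',w''$ with $W(w',w'')\cap\tilde B=\emptyset$ is precisely the question of whether $u_{w_0}$ is indispensable, and the construction gives no reason the answer should be no. So as written the argument does not close, and the choice of witness (a vector attached to a gap) is not obviously the right one.

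The repair, and what the paper does, is to take the witness to be a vertex of $\conv(\tilde B)$ itself. Each $b_i\in\tilde B$ is a primitive generator of its two adjacent cones, hence always lies in $H_1\cup\cdots\cup H_n$ and always gives a Graver element; strong robustness therefore forces $-b_i$ to be a visible lattice point of the cone containing it, i.e.\ $-b_i\in\calh_A$ (Lemma~\ref{lem:goodlemma}). Central symmetry then follows from a short convexity argument: if $\tilde b$ is a vertex of $P=\conv(\tilde B)$ and $-\tilde b\in\calh_A$ is not a vertex, then $-\tilde b\in\conv(b_1,b_2,(0,0))$ for some $b_1,b_2\in\tilde B$; since $-b_1,-b_2\in\calh_A\subseteq P$ as well, we get $\tilde b\in\conv(-b_1,-b_2,(0,0))$, contradicting that $\tilde b$ is a vertex. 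If you want to keep your contrapositive framing, the statement to prove is that a non-centrally-symmetric $P$ has some vertex $v$ whose antipode $-v$ fails to be a visible lattice point of its cone; then $u_v$ is your Graver, non-indispensable binomial.
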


In fact, Theorem \ref{thm:main} provides a stronger answer to Question \ref{ques:big}
in the case of codimension $2$ toric ideals.

\begin{cor}\label{cor:2bouquet}
If a codimension $2$ toric ideal $I_A$ is strongly robust then $A$ has at least $2$  mixed bouquets.
\end{cor}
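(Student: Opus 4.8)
The plan is to read off Corollary~\ref{cor:2bouquet} from Theorem~\ref{thm:main} together with a little planar convex geometry. Write $P = \conv(\tilde{B})$. Since $\rank A = n-2$, the Gale transform has rank $2$, so the $b_i$ span $\rr^2$ and $P$ is a genuine (two\nobreakdash-dimensional) polygon. By Theorem~\ref{thm:main} $P$ is centrally symmetric, and using that $I_A$ is positively graded (so $\sum_{i=1}^n b_i = 0$ and the origin lies in the interior of $P$) together with the finer description of the reduced Gale transform developed in the following sections, the center of symmetry is the origin; that is, $P = -P$. Note that then every vertex $v$ of $P$ satisfies $v \neq 0$, since $0 \in \mathrm{int}\, P$, and $-v$ is again a vertex of $P$.

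Next I would make two observations about such a $P$. First, each vertex $v$ of $P$ equals $b_i$ for some $i$, and $-v$ equals $b_j$ for some $j$; since $v$ and $-v$ are nonzero and parallel, $i$ and $j$ lie in a common bouquet $S$, and $S$ is \emph{mixed}: there is no sign vector $\epsilon \in \{\pm 1\}^2$ with $\epsilon_k (v)_k \geq 0$ and $\epsilon_k (-v)_k \geq 0$ for all $k$ (this would force $v = 0$), so $\{b_s : s \in S\}$ is not contained in a single orthant. Second, $P$ has at least two antipodal pairs of vertices, and one can pick two such pairs $\{v_1, -v_1\}$ and $\{v_2, -v_2\}$ with $v_1$ and $v_2$ linearly independent: if instead $v_2 = \lambda v_1$ with $\lambda \notin \{0, \pm 1\}$, then whichever of $v_1, v_2$ is shorter is a strict convex combination of $\pm$ the other and hence not a vertex, a contradiction.

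Combining the two observations: the bouquet $S_1$ containing indices with $b$-values $v_1$ and $-v_1$ is mixed, and likewise the bouquet $S_2$ for $v_2$ and $-v_2$ is mixed. Since all vectors indexed by a single bouquet are parallel while $v_1 \not\parallel v_2$, the bouquets $S_1$ and $S_2$ are distinct, so $A$ has at least two mixed bouquets. (Along the way this also recovers that $A$ has at least two bouquets at all, which is clear anyway since $\rank B = 2$ rules out a single parallel class.)

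The only delicate point is the claim, in the first paragraph, that the centrally symmetric polygon of Theorem~\ref{thm:main} is centered at the origin: positive gradedness alone gives $\sum b_i = 0$ and $0 \in \mathrm{int}\, P$, but not by itself that the center is the origin, so this step must invoke the explicit structure of the reduced Gale transform from the body of the paper. Once $P = -P$ is in hand, everything else is the elementary bookkeeping above.
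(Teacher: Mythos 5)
Your proposal is correct and takes essentially the same route as the paper: the paper's own proof simply notes that the two-dimensional, centrally symmetric polygon $\conv(\tilde{B})$ has two pairs of opposite vertices $\tilde{b}_1,-\tilde{b}_1$ and $\tilde{b}_2,-\tilde{b}_2$, each yielding a mixed bouquet, and you supply the same argument with the bookkeeping (parallelism classes, linear independence of the two pairs) made explicit. Your one flagged worry is handled exactly as you suggest --- the proof of Theorem~\ref{thm:main} establishes symmetry about the origin directly by showing $-\tilde{b}$ is a vertex whenever $\tilde{b}$ is --- though your parenthetical claim that positive grading gives $\sum_i b_i = 0$ is not actually implied (only that the origin lies in the interior of $P$); since you do not rely on it, this does not affect the argument.
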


Both of these results will be a consequence of the general characterization of strongly robust
codimension $2$ toric ideals that we prove in the next section.
The proof uses the Peeva-Sturmfels  \cite{Peeva1998} theory of toric ideals of codimension $2$.
While the result of Theorem \ref{thm:main} does not directly generalize to
toric ideals of higher codimension, it does suggest that the property of being strongly robust
is connected to the geometry of the Gale transform, which might suggest other
approaches to Question \ref{ques:big}.

  
\section{Proof of Theorem \ref{thm:main} and Corollary \ref{cor:2bouquet}}\label{sec:2}

To prove Theorem \ref{thm:main} we need more details about strongly robust toric ideals,
and results about generating sets of codimension $2$ toric ideals.  Note that the main definitions and
constructions will be illustrated in Example \ref{ex:Gale}.

First of all, we need to formally introduce the definition of strongly
robust toric ideal \cite{Boocher2015b, Petrovic2015}.
To explain this we introduce some definitions.
Given a vector $u \in \zz^n$ the support of $u$, ${\rm supp}(u) \subseteq [n]$ is the
set of indices $i$ where $u_i$ is not zero.
Let $u \in \nn^n$.  The fiber of $u$ is the set $\calf(u) = \{ v \in \nn^n :  Au = Av \}$.
Clearly if $p^u - p^v \in I_A$ then $u,v$ belong to the same fiber. 
A binomial $p^u -p^v$ is called an \emph{indispensable binomial} if
$\calf(u) = \{u,v\}$ and $\supp(u) \cap \supp(v) = \emptyset$.
The set of all indispensable binomials is denoted $\cals(A)$. 
A binomial $p^u - p^v \in I_A$ is called \emph{primitive} if
there is no other binomial $p^{u'} - p^{v'} \in I_A$ such that
$p^{u'} | p^u$ and $p^{v'} | p^v$.  The set of all primitive binomials in
$I_A$ is called the \emph{Graver basis} of $A$, and denoted
$\calg r(A)$.   The universal Gr\"obner basis of $A$ is a subset of
the Graver basis, and the set of indispensable binomials are a subset
of the universal Gr\"obner basis.  The set of indispensable binomials
appear in every binomial minimal generating set of $I_A$.  This leads
to the following definition:

\begin{defn}
The toric ideal $I_A$ is \emph{strongly robust} if $\cals(A) = \calg r(A)$.
\end{defn}

In \cite{Petrovic2015} strongly robust toric ideals are called $\emptyset$-Lawrence.
Clearly every strongly robust toric ideal is robust.  Boocher et al \cite{Boocher2015b}
wonder (Question 6.1) if robust implies strongly robust for toric ideals, and prove
this is true in some instance associated to graphs.  

One useful tool for analyzing the Graver bases of $I_A$, is its connection to the
Lawrence lifting.  Recall that the definition of the Lawrence lifting from the
introduction.  Its toric ideal $I_{\Lambda(A)}$ we consider to be in the ring
$\kk[p,q]$ with $2n$ indeterminates.  Binomials in $I_{\Lambda(A)}$ have the form
$p^u q^v - p^v q^u$ such that $p^u - p^v \in I_A$.

\begin{thm}\label{thm:Lawrence} \cite[Alg 7.2]{Sturmfels1996} 
Let $A \in \zz^{d \times n}$.  Let $M = \{ p^{u_i} q^{v_i} - p^{v_i} q^{u_i} : i = 1, \ldots, m \}$
 be a binomial minimal generating set of the
toric ideal $I_{\Lambda(A)}$.  Then $\{ p^{u_i}  - p^{v_i}  : i = 1, \ldots, m \}$ is the
Graver basis of $I_A$.
\end{thm}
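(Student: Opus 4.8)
The plan is to prove the identity in two parts. First I would show that the Lawrence lifting is itself strongly robust, so that \emph{every} minimal binomial generating set $M$ of $I_{\Lambda(A)}$ must coincide, up to signs of the binomials, with the Graver basis $\calg r(\Lambda(A))$. Second, I would match $\calg r(\Lambda(A))$ with $\calg r(A)$ under the projection $p^u q^v - p^v q^u \mapsto p^u - p^v$ that forgets the $q$-variables, which then identifies the projection of $M$ with the Graver basis of $A$.

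For the first part, the heart of the matter is a fiber computation showing that each Graver binomial of $I_{\Lambda(A)}$ is indispensable. Since $\ker_\zz \Lambda(A) = \{(u,-u) : u \in \ker_\zz A\}$, every reduced binomial of $I_{\Lambda(A)}$ has the form $p^{u^+} q^{u^-} - p^{u^-} q^{u^+}$ for some $u \in \ker_\zz A$, where $u^+, u^-$ denote the positive and negative parts. I would compute the fiber of $(u^+, u^-)$ under $\Lambda(A)$: writing $s = u^+ + u^-$, the equation $\Lambda(A)(c,d) = \Lambda(A)(u^+,u^-)$ forces $c + d = s$ and $Ac = Au^+$, so the fiber is $\{(c, s-c) : 0 \le c \le s,\ Ac = Au^+\}$. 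Substituting $w = c - u^+ \in \ker_\zz A$ turns the constraints into $-u^+ \le w \le u^-$. The claim is that if $p^{u^+} - p^{u^-} \in \calg r(A)$ then $w \in \{0, -u\}$: otherwise the binomial $p^{w^-} - p^{w^+}$ lies in $I_A$, is not equal to $p^{u^+} - p^{u^-}$ (as $w \neq -u$), and satisfies $p^{w^-} \mid p^{u^+}$ and $p^{w^+} \mid p^{u^-}$ (precisely because $w^- \le u^+$ and $w^+ \le u^-$), contradicting primitivity of $p^{u^+}-p^{u^-}$. Hence the fiber is exactly $\{(u^+, u^-), (u^-, u^+)\}$, and since the two monomials of $p^{u^+} q^{u^-} - p^{u^-} q^{u^+}$ have disjoint support, this binomial is indispensable.

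I would then close the first part using only minimality. The indispensable binomials lie in every binomial generating set, and by the previous paragraph they already exhaust $\calg r(\Lambda(A))$; since the Graver basis generates the ideal, the chain $\calg r(\Lambda(A)) = \cals(\Lambda(A)) \subseteq M$ together with the fact that $\calg r(\Lambda(A))$ alone generates $I_{\Lambda(A)}$ forces $M = \calg r(\Lambda(A))$ up to signs. For the second part I would record the primitivity correspondence by a divisibility computation: a reduced binomial $p^{v^+} q^{v^-} - p^{v^-} q^{v^+}$ divides $p^{u^+} q^{u^-} - p^{u^-} q^{u^+}$ in the sense required for primitivity exactly when $v^+ \le u^+$ and $v^- \le u^-$, which is the same condition under which $p^{v^+} - p^{v^-}$ witnesses the non-primitivity of $p^{u^+} - p^{u^-}$ in $I_A$. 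Thus $p^{u^+} q^{u^-} - p^{u^-} q^{u^+} \in \calg r(\Lambda(A))$ if and only if $p^{u^+} - p^{u^-} \in \calg r(A)$, and projecting $M = \calg r(\Lambda(A))$ yields precisely $\calg r(A)$.

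The main obstacle is the fiber computation in the second paragraph: everything turns on translating the linear constraints $-u^+ \le w \le u^-$ on a kernel element $w$ into the existence of a proper binomial divisor of $p^{u^+} - p^{u^-}$, which is exactly what links the two-element-fiber (indispensability) condition to primitivity. The remaining steps are bookkeeping with supports, divisibility of monomials, and the definition of a minimal generating set.
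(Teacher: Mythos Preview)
The paper does not supply its own proof of this theorem: it is quoted as a known result with a citation to \cite[Alg.~7.2]{Sturmfels1996}, so there is nothing in the paper to compare your argument against. That said, your proposal is correct and is essentially the classical argument behind the cited result: the fiber computation showing that a Graver element $(u^+,u^-)$ of $\Lambda(A)$ has a two-element fiber (hence is indispensable) is exactly the standard proof that Lawrence liftings are strongly robust, and the divisibility bookkeeping establishing the bijection $\calg r(\Lambda(A)) \leftrightarrow \calg r(A)$ is the routine verification. One small remark: in your second part you should note that it suffices to test primitivity in $I_{\Lambda(A)}$ against binomials whose monomials have disjoint support (otherwise the ``reduced binomial'' restriction needs a one-line justification), but this is standard and does not affect the argument.
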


A key tool for studying toric ideals in codimension $2$ are the
reduced Gale diagrams.  These were used by Peeva and Sturmfels \cite{Peeva1998}
to give a compete description of the free resolution of codimension
$2$ toric ideals.  We define
them now:

Let $A \in \zz^{(n-2) \times n}$ be a matrix of rank $n-2$ and
$B$ the resulting Gale configuration.  Let $B = \{b_1, \ldots, b_n\}$
be the resulting list of row vectors, with $b_i = (b_{i1}, b_{i2})$.
The \emph{reduced Gale configuration} $\tilde{B} = \{\tilde{b}_1, \ldots, \tilde{b}_n \}$
is obtained by setting 
\[
\tilde{b}_i =  \gcd(b_{i1}, b_{i2})^{-1} (-b_{i2}, b_{i1}).
\]
That is, $\tilde{B}$ is obtained from $B$ by rotating the vectors by $90$ degrees
and scaling so that elements in each vector are relatively prime.
For the notion of a minimal generating set to be meaningful, we
need to assume that the toric ideal $I_A$ is positively graded.  In
terms of the reduced Gale configuration, this means that there is no nonzero
vector $w \in \qq^2$ such that $w^T \tilde{b}_i > 0$ for all $i$. 
With this assumption, the vectors $\tilde{b}_i$ can be ordered in such a way
that each pair $\tilde{b}_i, \tilde{b}_{i+1}$ span a cone such that no
other $\tilde{b}_j$ lies in the interior of the cone (where  $\tilde{b}_{n+1} = \tilde{b}_{1}$).

For each cone ${\rm cone}( \tilde{b}_i, \tilde{b}_{i+1})$, let $H_i$ be its
Hilbert basis, which is the minimum generating set of the monoid 
${\rm cone}( \tilde{b}_i, \tilde{b}_{i+1}) \cap \zz^2$.  Define the 
\emph{Hilbert basis} of the reduced Gale configuration to be the set:
\[
\calh_A  =  \{ u \in \zz^2 :  \mbox{ both } u \mbox{ and } -u \mbox{ are in }
H_1 \cup H_2 \cup \cdots \cup H_n  \}.
\]

\begin{thm}\cite[Theorem 3.7]{Peeva1998} \label{thm:Peeva}
Let $A \in \zz^{(n-2) \times n}$ have rank $n-2$, and $B$ the Gale configuration.
A vector $u \in \zz^2$ is in $\calh_A$ if and only if $p^{(Bu)_+}- p^{(Bu)_-}$ is an
indispensable binomial of the toric ideal $I_A$.  Furthermore, the indispensable binomials
are a generating set for $I_A$, unless there are no indispensable binomials, in
which case $I_A$ is a complete intersection.
\end{thm}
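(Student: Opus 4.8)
The plan is to use the Gale transform to turn the statement into a question about lattice points of a bounded planar polygon, which can then be matched against the fan of cones $\cone(\tilde{b}_i,\tilde{b}_{i+1})$ that defines $\calh_A$. Fix $u\in\zz^2$ and put $m = Bu\in\ker_\zz A$, so that $p^{(Bu)_+}-p^{(Bu)_-}$ is a binomial of $I_A$ with disjoint support; by definition it is indispensable precisely when the fiber $\calf\bigl((Bu)_+\bigr)$ equals $\{(Bu)_+,(Bu)_-\}$. Every element of that fiber has the form $(Bu)_+ + Bu'$ for a unique $u'\in\zz^2$, and the constraint $(Bu)_+ + Bu'\ge 0$ unwinds, coordinate by coordinate, to: $(Bu')_i\ge 0$ whenever $(Bu)_i\le 0$, together with $(Bu')_i+(Bu)_i\ge 0$ whenever $(Bu)_i>0$. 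Hence $\calf\bigl((Bu)_+\bigr)$ is in bijection with $P_u\cap\zz^2$, where
\[
P_u \;=\; \{\, u'\in\rr^2 : (Bu)_+ + Bu' \ge 0 \,\}
\]
is a polygon, bounded because $I_A$ is positively graded, that always contains $0$ (which gives back $(Bu)_+$) and $-u$ (which gives $(Bu)_-$). So the first assertion reduces to the purely combinatorial claim that $P_u\cap\zz^2 = \{0,-u\}$ if and only if both $u$ and $-u$ lie in $H_1\cup\cdots\cup H_n$.

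I would prove that claim in two moves. First, if $u$ is not primitive, then the segment from $0$ to $-u$, which lies in the convex set $P_u$, contains an intermediate lattice point, whereas every member of the Hilbert basis of a two-dimensional rational cone is primitive; so both sides fail and one may assume $u$ primitive. Second, group the inequalities defining $P_u$ by the sign of $(Bu)_i$ and write $P_u = C^-\cap(-u+C^+)$, where $C^-$, respectively $C^+$, is the cone cut out by $(Bv)_i\ge 0$ over the indices with $(Bu)_i\le 0$, respectively $(Bu)_i\ge 0$. Since $\tilde{b}_i$ is, by definition, the $90^\circ$ rotation of $b_i$ (rescaled), it is orthogonal to $b_i$, so the bounding rays of $C^\pm$ are spanned by vectors $\pm\tilde{b}_j$; thus $C^-$ is a cone with apex $0$ and $-u+C^+$ is a cone with apex $-u$, the set $P_u$ has $0$ and $-u$ as opposite vertices, and $P_u$ sits inside the fan whose maximal cones are the $\cone(\tilde{b}_i,\tilde{b}_{i+1})$. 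A direct lattice-point count on this polygon — using a unimodular triangulation of the relevant two-dimensional cones, or Pick's formula — then shows $P_u\cap\zz^2 = \{0,-u\}$ exactly when $u$ is one of the Hilbert basis generators of the maximal fan cone containing it and, at the same time, $-u$ is one of the Hilbert basis generators of the maximal fan cone containing $-u$, i.e.\ exactly when $u,-u\in\bigcup_j H_j$. I expect this last equivalence to be the main obstacle: one must keep track of which $\tilde{b}_j$ and $-\tilde{b}_j$ actually bound $C^\pm$, connect the statement ``$P_u$ contains no lattice point besides $0$ and $-u$'' with ``$u$ is a Hilbert basis generator of the fan cone containing it'', and treat separately the degenerate configurations in which $u$ is parallel to some $\tilde{b}_k$, so that $(Bu)_k=0$ and the index $k$ contributes a bounding ray to both $C^-$ and $C^+$. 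Everything else is elementary convex and lattice geometry in the plane.

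For the ``furthermore'', run the same dictionary. The condition $\calh_A=\emptyset$ says no maximal cone of the fan has a Hilbert basis generator whose negative is also a Hilbert basis generator; this is exactly the combinatorial situation in which a pair of binomials — those attached to two Hilbert basis generators whose cones together sweep out a fundamental region of the fan — already generates $I_A$, and as their monomials involve disjoint variables they form a homogeneous regular sequence, so $I_A$ is a complete intersection. When $\calh_A\ne\emptyset$, one must show $\cals(A)$ generates $I_A$. This can be done by a ``fan walk'': any two elements of a common fiber are connected by a chain of moves coming from Hilbert basis generators of consecutive fan cones, and because traversing the fan and returning uses a generator together with its negative, every such move is an indispensable binomial; the inductive step that makes this work is the conformal decomposition identity
\[
p^{(Bu)_+}-p^{(Bu)_-} \;=\; p^{(Bu'')_+}\bigl(p^{(Bu')_+}-p^{(Bu')_-}\bigr) + p^{(Bu')_-}\bigl(p^{(Bu'')_+}-p^{(Bu'')_-}\bigr),
\]
valid whenever $u=u'+u''$ with $(Bu)_+=(Bu')_+ +(Bu'')_+$, which rewrites a long binomial in terms of shorter ones until one lands on Hilbert basis generators of fan cones. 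Equivalently — and this is how Peeva and Sturmfels proceed — a positively graded codimension $2$ toric ideal is Cohen--Macaulay, hence is either a complete intersection or has a Hilbert--Burch resolution $0\to\kk[p]^{t-1}\to\kk[p]^{t}\to I_A\to 0$ whose last map has the indispensable binomials as its entries, and by the dictionary above those $t$ binomials are exactly the ones attached to antipodal pairs of Hilbert basis generators.
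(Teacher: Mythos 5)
This statement is quoted from Peeva--Sturmfels \cite{Peeva1998}; the paper itself gives no proof, so your attempt can only be judged on its own terms. Your opening reduction is correct and is the standard dictionary: since the columns of $B$ span $\ker_\zz A$, the fiber of $(Bu)_+$ is in bijection with the lattice points of the polygon $P_u=\{u'\in\rr^2:(Bu)_++Bu'\ge 0\}$ (bounded by positive grading), and indispensability of $p^{(Bu)_+}-p^{(Bu)_-}$ is equivalent to $P_u\cap\zz^2=\{0,-u\}$.

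The heart of the theorem, however, is the equivalence between ``$P_u\cap\zz^2=\{0,-u\}$'' and ``both $u$ and $-u$ lie in $H_1\cup\cdots\cup H_n$,'' and at exactly this point your argument stops being a proof and becomes a promise: you write that a ``direct lattice-point count \dots then shows'' the equivalence, and in the same breath identify it as ``the main obstacle.'' The difficulty is genuine. The polygon $P_u=C^-\cap(-u+C^+)$ is bounded by rays spanned by certain vectors $\pm\tilde b_j$, and these are in general \emph{not} the generators $\tilde b_j,\tilde b_{j+1}$ of the fan cone containing $u$, because the line $\rr\tilde b_i$ can cross the interior of $\cone(\tilde b_j,\tilde b_{j+1})$ whenever $-\tilde b_i$ lies in that interior. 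Concretely, if $u=v+w$ is a nontrivial decomposition witnessing $u\notin H_j$, the sign pattern of $Bv$ need not refine that of $Bu$, so neither $v$ nor $-v$ is automatically an extra lattice point of $P_u$; conversely, an extra lattice point of $P_u$ does not hand you a decomposition of $u$ or of $-u$ within a single fan cone without further work. Determining which $\pm\tilde b_j$ bound $C^\pm$ and why the visibility condition for $u$ in its cone \emph{together with} that for $-u$ in its cone rules out all other lattice points of $P_u$ is the entire content of the cited theorem, and it is absent here (note also that the asymmetric roles of $u$ and $-u$ are what make $\calh_A$ require \emph{both} to be Hilbert basis elements). The ``furthermore'' is in the same state: you offer two alternative strategies (a fan walk via a conformal decomposition identity, or Hilbert--Burch), neither executed, and you assert without justification that $\calh_A=\emptyset$ yields two binomials forming a regular sequence. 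As it stands, the proposal is a plausible plan whose decisive steps are all deferred.
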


Hilbert bases are complicated to compute for general cones, but in dimension $2$ there
is a particularly simple geometric description.

\begin{prop}\label{prop:hilb}
Let $a,b \in \zz^2$ and let $P = {\rm cone}(a,b)$.  The Hilbert basis
of $P$ consists of all lattice points in the polyhedron ${\rm conv}( (P \cap \zz^2) \setminus \{(0,0)\})$
that are visible from the origin.
\end{prop}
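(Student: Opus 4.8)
The plan is to prove Proposition \ref{prop:hilb} by a direct two-dimensional argument relating the Hilbert basis of a rational cone $P = \cone(a,b) \subseteq \rr^2$ to the lattice points on the boundary of the region $Q := \conv\bigl((P \cap \zz^2) \setminus \{(0,0)\}\bigr)$. First I would fix notation: after applying $\mathrm{GL}_2(\zz)$ we may assume one generating ray of $P$ is the positive $x$-axis, spanned by the primitive vector $\tilde a = (1,0)$, and the other ray is spanned by a primitive vector $\tilde b = (p,q)$ with $q > 0$; this reduction is lossless because $\mathrm{GL}_2(\zz)$ preserves both the lattice and the Hilbert basis. A lattice point $u \neq (0,0)$ in $P$ is \emph{visible from the origin} if the half-open segment from $(0,0)$ to $u$ contains no other lattice point, equivalently if $u$ is primitive.

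The key observation is that $\partial Q$, the boundary of the convex hull $Q$, consists of two ``infinite edges'' lying along the two extreme rays together with a finite polygonal path; I claim the Hilbert basis $H$ of $P$ is exactly the set of primitive lattice points lying on $\partial Q$. For the inclusion $H \subseteq \partial Q$: if $u \in P \cap \zz^2$ is \emph{not} on $\partial Q$, then $u$ lies in the interior of $Q$, so we can write $u = v + w$ with $v$ a lattice point of $Q$ on the boundary in the direction ``toward the origin'' and $w \in P \cap \zz^2$ nonzero — more carefully, interior lattice points of a $2$-dimensional lattice polytope whose vertices lie in $P$ can be peeled off by subtracting a boundary vector, showing $u$ is decomposable and hence not in $H$. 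For the reverse inclusion, a primitive boundary lattice point $u$ cannot be written as $u = v + w$ with $v, w \in P \cap \zz^2$ both nonzero: such a decomposition would force $u$ to lie strictly between $v$ and $2u - v$ on $\partial Q$ only if $v,w$ are collinear with $u$, and collinearity with $u$ primitive forces $v$ or $w$ to be a nonnegative multiple of $u$ hence (being a lattice point in $P$, bounded by the boundary position of $u$) to be $0$ or $u$ itself; the non-collinear case is excluded because then $v + w$ would lie strictly inside the triangle $\conv\{0, v', w'\}$ for suitable rescalings and could not equal a point on $\partial Q$. The cleanest phrasing of the non-collinear obstruction is: if $u \in \partial Q$ and $u = v+w$ with $v,w$ nonzero lattice points of $P$, then $\tfrac12(v+w) = \tfrac12 u \in Q$, and since $v, w \in Q$ as well, convexity plus $u$ being a boundary point forces $v, w, u$ collinear; then primitivity of $u$ closes the argument.

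Finally I would assemble these facts into the statement as phrased. The set of lattice points of $Q$ that are \emph{visible from the origin} is precisely the set of primitive lattice points on $\partial Q$: a lattice point in the interior of $Q$ always has another lattice point strictly between it and the origin (because the segment to the origin exits $Q$ through a boundary face containing lattice points, by the structure of $\partial Q$ as a lattice polygon contained in $P$), while a primitive point on $\partial Q$ is visible by definition, and a non-primitive point on $\partial Q$ has its primitive sub-multiple in $P$ blocking it. Combined with the previous paragraph, this identifies $H$ with the visible lattice points of $Q$, which is the claim.

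The main obstacle I anticipate is making the ``peeling off a boundary vector'' step fully rigorous: I need to argue that any lattice point $u$ in the interior of the lattice polygon $Q$ decomposes as $u = u' + c$ with $c$ a lattice point on $\partial Q$ (in fact a Hilbert basis element) and $u'$ still in $P \cap \zz^2$, nonzero. The honest way to do this is downward induction on, say, the value of a linear functional that is positive on $P$: pick the boundary point $c \in H$ that is ``visible from $u$ looking toward the origin'' — i.e., $c$ is the first lattice point hit by the boundary of $Q$ along the ray from $u$ toward $0$ — and check that $u - c \in P$ (it stays in the cone because $P$ is convex and both $u$ and $0$ are ``on the correct side'') and $u - c$ is a genuine lattice point strictly closer to the origin. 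This is where one must be slightly careful that $Q$'s vertices are exactly among the Hilbert basis elements, which is essentially the content of the proposition being bootstrapped, so the cleanest route may be to prove the set equality $H = \{\text{primitive points of }\partial Q\}$ directly by the minimality/indecomposability characterization rather than by an inductive peeling, using the two-dimensional fact that $\partial Q$ between two consecutive primitive boundary points $c_j, c_{j+1}$ forms a ``unimodular-like'' step with $\det(c_j, c_{j+1})$ controlling the absence of intermediate lattice points.
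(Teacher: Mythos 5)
The paper states Proposition \ref{prop:hilb} without proof (it is a classical fact about two-dimensional cones), so there is no in-paper argument to compare against; judged on its own, your proposal has a genuine gap, and it originates in your definition of ``visible from the origin.'' You take a lattice point to be visible iff it is primitive. Under that reading the proposition is false: for $P = \cone((1,0),(0,1))$ and $Q = \conv((P\cap\zz^2)\setminus\{(0,0)\})$, the point $(1,2)$ is a primitive lattice point of $Q$ but is not in the Hilbert basis, since $(1,2)=(0,1)+(1,1)$. The intended meaning --- forced by the way the proposition is used in the proof of Corollary \ref{cor:graverchar}, where visibility of one and the same lattice point can differ between the nested regions $P_{i,i+1}$ and $P_{i,j}$ --- is geometric: $u\in Q$ is visible when the open segment from $(0,0)$ to $u$ does not meet $Q$, i.e.\ $u$ lies on the compact (``near'') faces of $\partial Q$. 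Your attempt to reconcile the two readings rests on the assertion that ``a lattice point in the interior of $Q$ always has another lattice point strictly between it and the origin,'' which is false: the segment from $(0,0)$ to $(1,2)$ exits $Q$ through the edge $[(1,0),(0,1)]$ at $(1/3,2/3)$, which is not a lattice point, and the open segment contains no lattice point at all. In the correct definition the blocking is done by the region $Q$, not by a lattice point.

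The salvageable core of your write-up is the claim that the Hilbert basis equals the set of lattice points on the compact faces of $\partial Q$; that is the right statement, and it is exactly what the corrected notion of visibility picks out, so the final ``assembly'' paragraph should disappear rather than be patched. For irreducibility, your midpoint idea should be run through a supporting line: if $u$ lies on a compact face with supporting functional $\ell$ satisfying $\ell\ge c>0$ on $Q$ and $\ell(u)=c$, then $u=v+w$ with $v,w\in(P\cap\zz^2)\setminus\{0\}\subseteq Q$ gives $\ell(u)\ge 2c$, a contradiction. Your phrasing ``convexity plus $u$ being a boundary point forces $v,w,u$ collinear'' does not follow as stated: the point produced by convexity is $\tfrac12 u$, not $u$, and $\tfrac12 u\in Q$ is only contradictory when $u$ is on the near boundary (for $u=2v_1$ on an unbounded edge, $\tfrac12 u\in Q$ causes no contradiction). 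Finally, the reducibility of lattice points not on the compact faces --- your ``peeling'' step, which you yourself flag as incomplete --- still needs an argument; one clean route is to write $Q=\conv(\mathcal{V})+P$ for the (finite, lattice) vertex set $\mathcal{V}$ of $Q$ and induct on $\ell(u)$ for a linear functional $\ell$ that is positive on $(P\cap\zz^2)\setminus\{0\}$.
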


Combining Theorems \ref{thm:Lawrence} and \ref{thm:Peeva}, the Graver basis of
$A$ can also be characterized in terms of the reduced Gale configuration.

\begin{cor}\label{cor:graverchar}
Let $A \in \zz^{(n-2) \times n}$ have rank $n-2$, and $B$ the Gale configuration. 
Suppose that $\ker_\zz A \cap \nn^n  = \{0\}$. 
A vector $u \in \zz^2$ has either $u$ or ${}-u$ $ \in  H_1 \cup  \cdots \cup H_{n}$ if and only if 
$p^{(Bu)_+}- p^{(Bu)_-}$ is a primitive binomial of the toric ideal $I_A$.
\end{cor}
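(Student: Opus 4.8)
The plan is to pass through the Lawrence lifting and combine Theorems~\ref{thm:Lawrence} and~\ref{thm:Peeva} by a ``doubling'' trick on the Gale configuration. The key point is that primitive binomials $p^u - p^v$ of $I_A$ correspond, via Theorem~\ref{thm:Lawrence}, precisely to indispensable binomials $p^u q^v - p^v q^u$ of the Lawrence ideal $I_{\Lambda(A)}$, and the latter we can detect with the Hilbert-basis criterion of Theorem~\ref{thm:Peeva} applied to $\Lambda(A)$.

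First I would compute the Gale configuration of the Lawrence lifting. Since $\ker_\zz \Lambda(A) = \{(u,-u) : u \in \ker_\zz A\}$, a basis for $\ker_\zz A$ (equivalently, the columns of $B$, read as $n-2$ vectors in $\zz^n$) gives a basis for $\ker_\zz \Lambda(A)$ by stacking it with its negative; so the Gale transform of $\Lambda(A)$ is the $2n \times (n-2)$ matrix whose list of row vectors is $\{b_1, \ldots, b_n, -b_1, \ldots, -b_n\}$. Rotating by $90^\circ$ and rescaling as in the definition of the reduced configuration, the reduced Gale configuration of $\Lambda(A)$ is $\tilde B \cup (-\tilde B) = \{\tilde b_1, \ldots, \tilde b_n, -\tilde b_1, \ldots, -\tilde b_n\}$. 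Note $\Lambda(A)$ is positively graded exactly under the hypothesis $\ker_\zz A \cap \nn^n = \{0\}$ (which is what forbids a common positive relation among the doubled vectors), so Theorem~\ref{thm:Peeva} applies to $\Lambda(A)$.

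Next I would identify the Hilbert basis of the reduced Gale configuration of $\Lambda(A)$ in the sense of Theorem~\ref{thm:Peeva}. Ordering the $2n$ vectors $\tilde b_1, \ldots, \tilde b_n, -\tilde b_1, \ldots, -\tilde b_n$ cyclically, the maximal cones are exactly the cones $\cone(\tilde b_i, \tilde b_{i+1})$ from the original configuration together with their negatives $-\cone(\tilde b_i, \tilde b_{i+1}) = \cone(-\tilde b_i, -\tilde b_{i+1})$; thus the union of Hilbert bases of the cones is $(H_1 \cup \cdots \cup H_n) \cup -(H_1 \cup \cdots \cup H_n)$. Since this set is visibly symmetric under negation, the ``both $u$ and $-u$ lie in the union'' condition defining $\calh_{\Lambda(A)}$ is satisfied by a vector $u$ \emph{if and only if} $u$ or $-u$ lies in $H_1 \cup \cdots \cup H_n$. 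By Theorem~\ref{thm:Peeva} applied to $\Lambda(A)$, such $u$ are exactly the $u$ for which $p^{(\Lambda(A)\text{-Gale})u}$ is an indispensable binomial of $I_{\Lambda(A)}$; unwinding the Gale transform $\{b_i\} \cup \{-b_i\}$, this indispensable binomial is $p^{(Bu)_+} q^{(Bu)_-} - p^{(Bu)_-} q^{(Bu)_+}$. By Theorem~\ref{thm:Lawrence}, indispensable binomials of $I_{\Lambda(A)}$ are precisely the minimal generators, whose ``de-Lawrenced'' forms constitute the Graver basis of $A$; hence $p^{(Bu)_+} - p^{(Bu)_-}$ is a primitive binomial of $I_A$. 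Conversely any primitive binomial arises this way. This chain of equivalences is exactly the claim.

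The main obstacle I anticipate is the bookkeeping around the cyclic ordering and the degenerate cases: one must check that when $\tilde b_i$ and $\tilde b_j$ (or $\tilde b_i$ and $-\tilde b_j$) are parallel, the ``no other vector in the interior of the cone'' ordering still behaves, and that collapsing parallel vectors into a single representative (as in the passage from $B$ to the reduced configuration, which already folds bouquets together) is compatible on both the $A$ and $\Lambda(A)$ sides. A second point needing care is that Theorem~\ref{thm:Peeva} is stated for configurations that are already ``reduced'' in the sense that each $\tilde b_i$ is primitive and distinct directions are listed once; applying it to $\Lambda(A)$ requires noting that the reduced Gale configuration of $\Lambda(A)$ is genuinely $\tilde B \cup (-\tilde B)$ with the obvious identifications, which is where the positivity hypothesis $\ker_\zz A \cap \nn^n = \{0\}$ does its work. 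Once these are dispatched, the argument is a direct concatenation of the two cited theorems.
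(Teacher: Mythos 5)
Your overall strategy---pass to the Lawrence lifting, observe that its Gale configuration is the centrally symmetric doubling $\tilde B\cup(-\tilde B)$, and apply Theorem~\ref{thm:Peeva} there---is exactly the paper's strategy. But there is a genuine gap at the step where you claim that, after ordering the $2n$ vectors cyclically, ``the maximal cones are exactly the cones $\mathrm{cone}(\tilde b_i,\tilde b_{i+1})$ from the original configuration together with their negatives,'' and hence that the union of the Hilbert bases of the doubled configuration is $(H_1\cup\cdots\cup H_n)\cup-(H_1\cup\cdots\cup H_n)$. This is false in general: when $\tilde B$ is not already centrally symmetric (which is precisely the interesting case), the rays through the vectors of $-\tilde B$ land in the interiors of the cones of $\tilde B$, so the cone decomposition of the doubled configuration is the \emph{common refinement} of the decompositions coming from $\tilde B$ and from $-\tilde B$. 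Subdividing a cone strictly enlarges the union of Hilbert bases: e.g.\ the Hilbert basis of $\mathrm{cone}((1,0),(0,1))$ is $\{(1,0),(0,1)\}$, but splitting it along the ray through $(1,2)$ yields $\{(1,0),(1,1),(1,2)\}\cup\{(1,2),(0,1)\}$. So the containment $\bigcup_i H_i^{\pm}\subseteq(\bigcup_j H_j)\cup(-\bigcup_j H_j)$, which is the direction you actually need for ``primitive $\Rightarrow$ $u$ or $-u$ in some $H_j$,'' does not follow from your description of the cones. (The reverse containment is easy, since a lattice point visible in $P_j$ remains visible in any sub-polyhedron of $P_j$ containing it.)

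The missing idea, which is the real content of the paper's proof, is this: when a cone $\mathrm{cone}(\tilde b_i,\tilde b_{i+1})$ of the doubled configuration has $\tilde b_i\in B$ and $\tilde b_{i+1}\in -B$, write it as the intersection $\mathrm{cone}(\tilde b_i,\tilde b_j)\cap\mathrm{cone}(\tilde b_k,\tilde b_{i+1})$ of a cone used to compute $H_1\cup\cdots\cup H_n$ with the negative of another such cone; then the polyhedra of Proposition~\ref{prop:hilb} satisfy $P_{i,i+1}=P_{i,j}\cap P_{k,i+1}$, and one checks that a lattice point visible from the origin in the intersection of two such polyhedra is visible in at least one of them. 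That is what guarantees that each new Hilbert basis element $u$ created by the refinement already has $u$ or $-u$ in some $H_j$. Your proposal needs this argument (or an equivalent one) inserted where you currently assert the cone decompositions coincide; your closing paragraph about ``bookkeeping around the cyclic ordering'' gestures at parallel vectors but misses that the genuine difficulty is the interleaving of non-parallel vectors of $B$ and $-B$.
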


\begin{proof}
For the Gale configuration $B$ of $A$, define $B^\pm : =  B  \cup -B$,
which is the Gale configuration of the Lawrence lifting $\Lambda(A)$, and
let $\tilde{B}^\pm$ be its reduced Gale configuration.
As for $B$, we assume that the elements of $\tilde{B}^\pm$ are ordered so that
each cone ${\rm cone}( \tilde{b}_i, \tilde{b}_{i+1})$ no other $\tilde{b}_j$ lies
in its interior.  
Let $H^{\pm}_i$  be the Hilbert basis of ${\rm cone}( \tilde{b}_i, \tilde{b}_{i+1})$.
Since $\tilde{B}^\pm$ is centrally symmetric, the Hilbert basis of the
resulting Lawrence configuration $\Lambda(A)$ will be the union
of all the $H^{\pm}_i$.  By Theorem \ref{thm:Peeva} these vectors
determine the minimal generating set of $I_{\Lambda(A)}$.  By Theorem \ref{thm:Lawrence}
those vectors then determine the Graver basis of $I_A$.  So to
prove the corollary, we need to show that every $u$ in some $H^{\pm}_i$,
either $u$ or $-u$ appears in $H_1 \cup  \cdots \cup H_{n}$.

So let $u \in H^{\pm}_i$.  If $\tilde{b}_i$ and $\tilde{b}_{i+1}$ are both in $B$
or both in $-B$, then ${\rm cone}( \tilde{b}_i, \tilde{b}_{i+1})$ or ${\rm cone}( -\tilde{b}_i, -\tilde{b}_{i+1})$
is one of the cones described in the Hilbert basis of $A$, so $u$ or $-u$ belongs to
$ H_1 \cup  \cdots \cup H_{n}$.  This leaves the case that $\tilde{b}_i \in B$ and 
$\tilde{b}_{i+1} \in -B$ (the reverse situation follows from a symmetric argument).
Looking at the ordering on $B$, there will be a unique smallest $j$ such that
$\tilde{b}_j \in B$ and ${\rm cone} (\tilde{b}_i, \tilde{b}_j)$ forms one of the
cones for computing   $H_1 \cup  \cdots \cup H_{n}$.  Similarly, the there is a unique
largest $k$ such that $-\tilde{b}_k \in B$ and ${\rm cone} (-\tilde{b}_k, -\tilde{b}_{i+1} )$ forms one of the
cones for computing   $H_1 \cup  \cdots \cup H_{n}$.  These vectors are guaranteed to 
exist by the positive grading assumption that 
Clearly, we have that $\ker_\zz A \cap \nn^n  = \{0\}$.
\[
{\rm cone}( \tilde{b}_i, \tilde{b}_{i+1}) =  {\rm cone} (\tilde{b}_i, \tilde{b}_j) \cap 
{\rm cone} (\tilde{b}_k, \tilde{b}_{i+1} ).
\]
Furthermore,  if we let 
\[
P_{i,i+1} = {\rm conv}( {\rm cone}( \tilde{b}_i, \tilde{b}_{i+1})\cap \zz^2 \setminus \{(0,0)\} )
\]
and defined $P_{i,j}$ and $P_{k,i+1}$ similarly, then we have that
\[
P_{i,i+1}  =  P_{i,j} \cap P_{k,i+1}.
\]
Since each of $P_{i,i+1}$, $P_{i,j}$, and $P_{k,i+1}$ is the convex hull of lattice points,
the lattice points visible from the origin in $P_{i,i+1}$, will be either
a lattice point visible from the origin in $P_{i,j}$ or in $P_{k,i+1}$, or both.
In the case that  $u \in P_{i,i+1}$ is a lattice point visible from the origin with $u \in P_{i,j}$,
then $u \in H_1 \cup  \cdots \cup H_{n}$.  In the case that $u \in P_{i,i+1}$ is a 
lattice point visible from the origin with $u \in P_{k,i+1}$ then $-u \in H_1 \cup  \cdots \cup H_{n}$.
\end{proof}

Corollary \ref{cor:graverchar} then reduces the problem of characterizing
strongly robust toric ideals in codimension $2$ to the following problem.

\begin{pr}
For which rank $n-2$ matrices $A \in \zz^{(n-2) \times n}$ is the Hilbert basis $\calh_A$
equal to $H_1 \cup H_2 \cup \cdots \cup H_n$?
\end{pr}

The answer is contained in the following Lemma.

\begin{lemma}\label{lem:goodlemma}
Let  $A \in \zz^{(n-2) \times n}$ have rank $n-2$ and $\tilde{B}$ the reduced
Gale diagram.  Then $I_A$ is strongly robust if and only if for
each $b_i \in \tilde{B}$, $-b_i \in \calh_A$.
\end{lemma}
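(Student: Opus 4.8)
The plan is to first turn the statement into a purely combinatorial fact about the set $C := H_1 \cup \cdots \cup H_n \subseteq \zz^2$. By Theorem~\ref{thm:Peeva} the indispensable binomials of $I_A$ are exactly the $p^{(Bu)_+}- p^{(Bu)_-}$ with $u \in \calh_A = C \cap (-C)$, while by Corollary~\ref{cor:graverchar} the primitive binomials are exactly those with $u \in C \cup (-C)$; since the assignment $u \mapsto p^{(Bu)_+}-p^{(Bu)_-}$ is injective up to the identification $u \sim -u$ (as $B$ has rank $2$), $I_A$ is strongly robust if and only if $C \cap (-C) = C \cup (-C)$, i.e.\ if and only if $C = -C$. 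Also each $\tilde b_i$ is a primitive generator of a bounding ray of $\sigma_i := \cone(\tilde b_i,\tilde b_{i+1})$, hence $\tilde b_i \in H_i \subseteq C$ automatically, so ``$-\tilde b_i \in \calh_A$'' is equivalent to ``$-\tilde b_i \in C$''. Thus the Lemma becomes: $C = -C$ if and only if $-\tilde b_i \in C$ for all $i$. The forward implication is immediate, so the real content is the converse.

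To prove the converse I would establish the key claim: if $-\tilde b_k \in C$ and $-\tilde b_{k+1} \in C$, then $-H_k \subseteq C$. Granting this, for $u \in C$ we have $u \in H_k$ for some $k$, and the hypothesis gives $-\tilde b_k,-\tilde b_{k+1}\in C$, whence $-u \in -H_k \subseteq C$; so $C \subseteq -C$ and therefore $C = -C$. (The degenerate case where $\tilde b_k,\tilde b_{k+1}$ are parallel, so $\sigma_k$ is a ray and $H_k=\{\tilde b_k\}$, is just the hypothesis for the index $k$.)

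To prove the key claim, set $\tau := -\sigma_k = \cone(-\tilde b_k,-\tilde b_{k+1})$. By Proposition~\ref{prop:hilb}, $-H_k$ is exactly the set of lattice points lying on the arc $L$ of $\partial\conv((\tau\cap\zz^2)\setminus\{0\})$ visible from the origin; its two endpoints are the primitive vectors $-\tilde b_k,-\tilde b_{k+1}$, which lie in $C$ by hypothesis. So let $w$ be a lattice point in the relative interior of $L$; then $w$ is primitive and interior to $\tau$. Let $\sigma_m$ be the fan cone containing $w$. If $w$ lies on a bounding ray of $\sigma_m$ then $w = \tilde b_m \in H_m \subseteq C$. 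If $w$ is interior to $\sigma_m$ and the open segment from $0$ to $w$ misses $\conv((\sigma_m\cap\zz^2)\setminus\{0\})$, then $w$ lies on the visible arc $L_m$ of that polyhedron, so $w \in H_m \subseteq C$. Otherwise that segment meets $\conv((\sigma_m\cap\zz^2)\setminus\{0\})$ first at a point $q=sw$ with $0<s<1$, lying in the relative interior of an edge $[v,v']$ of $L_m$; here $v,v' \in H_m \subseteq C$ are distinct and, since $w$ is primitive and $q$ is interior to the edge, lie strictly on opposite sides of the ray through $w$. Now $v,v'$ cannot both lie in $\tau$: otherwise $q \in [v,v'] \subseteq \conv((\tau\cap\zz^2)\setminus\{0\})$, contradicting $w \in L$. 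Say $v' \notin \tau$, lying beyond (without loss of generality) the ray through $-\tilde b_{k+1}$. Then $-\tilde b_{k+1}$ lies angularly between $w$ and $v'$, both in $\sigma_m$, so $-\tilde b_{k+1} \in \sigma_m$; being primitive and in $C$, it lies in $H_m$, hence on $L_m$. Similarly, the other bounding ray of $\tau' := \tau\cap\sigma_m$ has a primitive generator $\rho$ which is one of $-\tilde b_k$, $\tilde b_m$, $\tilde b_{m+1}$ — all in $C$ — and lies in $\sigma_m$, hence on $L_m$. The sub-arc of $L_m$ between $\rho$ and $-\tilde b_{k+1}$ is then a convex lattice path with primitive endpoints above which every lattice point of $\tau'\subseteq\sigma_m$ lies; by uniqueness it is the visible arc of $\conv((\tau'\cap\zz^2)\setminus\{0\})$. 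Since $w$ lies on $L$, $\tau'\subseteq\tau$, and $w$ is interior to $\tau'$, $w$ lies on that sub-arc, so $w \in L_m$, giving $w \in H_m \subseteq C$. The remaining cases ($v'$ beyond the other ray, or both $v,v'$ outside $\tau$, in which case $\tau \subseteq \sigma_m$) are handled in the same way.

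The step I expect to be the main obstacle is this last one. One is tempted to argue that $C$ is simply the set of lattice points on a convex polygon and conclude by reflection, but the concatenation of the $n$ visible arcs $L_1,\dots,L_n$ around the origin need \emph{not} be convex, so no such shortcut is available. Instead one must control precisely how the reflected cone $\tau=-\sigma_k$ overlaps the fan $\{\sigma_m\}$, and the delicate point is that whenever $w$ sits in a fan cone $\sigma_m$ that protrudes from $\tau$, the hypothesis $-\tilde b_i \in C$ (for $i=k$ or $k+1$) together with primitivity forces the offending bounding ray of $\tau$ onto the visible arc $L_m$, returning us to the easy case. Once the key claim is in hand, the easy converse $C = -C \Rightarrow -\tilde b_i \in C$ and the bookkeeping above assemble into the Lemma.
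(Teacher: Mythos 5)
Your proposal is correct, and the reduction at the start (strongly robust $\Leftrightarrow$ $C=-C$ where $C=H_1\cup\cdots\cup H_n$, via Theorem~\ref{thm:Peeva} and Corollary~\ref{cor:graverchar}) is exactly the reduction the paper makes implicitly. But the proof of the hard direction is genuinely different. The paper argues by \emph{symmetrizing the configuration}: whenever $-b_i\in\calh_A$ is a visible lattice point of some $P_{j,j+1}$, one may insert $-b_i$ into the Gale diagram, and subdividing $\cone(b_j,b_{j+1})$ at a visible lattice point leaves the union of Hilbert bases unchanged; iterating produces a centrally symmetric (Lawrence-type) diagram with the same $C$ and the same Graver basis, and the known strong robustness of Lawrence ideals finishes the argument. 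You instead prove the combinatorial statement $C=-C$ head-on, via the key claim that $-\tilde b_k,-\tilde b_{k+1}\in C$ forces $-H_k\subseteq C$, established by tracking how the reflected cone $\tau=-\sigma_k$ overlaps the fan $\{\sigma_m\}$ and showing that the relevant sub-arc of $L_m$ is the visible arc of $\conv\bigl((\tau\cap\sigma_m\cap\zz^2)\setminus\{0\}\bigr)$. What the paper's route buys is brevity and a conceptual reduction to the Lawrence case (at the cost of quietly invoking that Lawrence ideals are strongly robust and that the insertion preserves both $\calh_A$ and the Graver basis); what your route buys is a self-contained, purely two-dimensional lattice-geometry argument that never leaves the fan, at the cost of a longer case analysis. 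Your observation that the concatenation of the visible arcs need not be convex, so one cannot simply reflect a convex polygon, correctly identifies why the direct argument requires this care. Both proofs are valid.
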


\begin{proof}
Clearly the condition of the theorem is necessary since $b_i$ itself always belongs to
$H_1 \cup \cdots \cup H_n$.  On the other hand, if $-b_i \in \calh_A$ but $-b_i \notin B$,
we can add it to $B$ without changing $\calh_A$.  Indeed, if ${\rm cone}(b_j, b_{j+1})$
contains $-b_i$ as a visible lattice point of $P_{j,j+1}$,
then the visible lattice points arising in the cones 
${\rm cone}(b_j, -b_i)$ and ${\rm cone}(-b_i, b_{j+1})$ are precisely the visible lattice 
points in $P_{j,j+1}$.  By repeating this procedure, we end up with a Gale diagram 
that contains only pairs $b_i, -b_i$, which is the Gale diagram of a Lawrence matrix.
Hence, $I_A$ is strongly robust.
\end{proof}

Now we are in a position to Prove Theorem \ref{thm:main}.

\begin{proof}[Proof of Theorem \ref{thm:main}]
Suppose that $I_A$ is a strongly robust toric ideal in codimension $2$, let $\tilde{B}$ be the
reduced Gale configuration, and $P = {\rm conv}(\tilde{B})$ the convex hull
of the elements in $\tilde{B}$.  Let $\tilde{b}$ be a vertex of $P$.
We must show that $-\tilde{b}$ is also a vertex of $P$ to see that $P$ is
centrally symmetric.

Since $\tilde{b} \in B$, and $I_A$ is strongly robust, $-\tilde{b}$ belongs to $\calh_A$,
by Lemma \ref{lem:goodlemma}.  If $-\tilde{b}$ is not a vertex of $P$, then there are
two vectors $b_1, b_2 \in \tilde{B}$ such that $-\tilde{b}$ is in ${\rm conv}(b1, b2, (0,0))$.
Applying Lemma \ref{lem:goodlemma} again, we have the $-b_1$ and $-b_2$ are in $\calh_A$.
In particular, these two vectors are in $P$, by Proposition \ref{prop:hilb}.  However, this
forces that $\tilde{b} \in {\rm conv}(-b1, -b2, (0,0))$, so $\tilde{b}$ could not be a
vertex of $P$.
\end{proof}

\begin{proof}[Proof of Corollary \ref{cor:2bouquet}]
Since the polytope $\conv(\tilde{B})$ must be two dimensional and is centrally symmetric,
it must have at least two pairs of opposite vertices $\tilde{b}_1, -\tilde{b}_1$ and $\tilde{b}_2, -\tilde{b}_2$.
These two pairs of opposite vertices yield two mixed bouquets of the matrix $A$.
\end{proof}

\begin{ex}\label{ex:Gale}
Let $A$ be the matrix
\[
A = \begin{pmatrix}
1 & 0 & 1 & 0 & 0 & 0 \\
0 & 1 & 0 & 0 & 1 & 0 \\
0 & 1 & 0 & 1 & 0 & 1 \\
-2 & 0 & 0 & 0 & -4 & 5
\end{pmatrix}
\]
which has the Gale transform $B$  and reduced Gale transform $\tilde{B}$ respectively:
\[
B = 
\begin{pmatrix}
1 & 2 \\
-2 & 1 \\
-1 & -2 \\
0 & -1 \\
2 & -1 \\
2 & 0 \\
\end{pmatrix} \quad \quad  \tilde {B} =
\begin{pmatrix}
-2 & 1 \\
-1 & -2 \\
2 & -1 \\
1 & 0  \\
1 & 2  \\
0 & 1
\end{pmatrix}
\]
As can be see from the reduced Gale transform, illustrated in Figure \ref{fig:Gale},
the matrix $A$ satisfies the condition of Lemma \ref{lem:goodlemma}, and so the
toric ideal is strongly robust.  The minimal generating set, which equals the Graver basis,
consists of the following $6$ binomials that are in bijection with pairs of opposite lattice
points in $\mathcal{H}_A$:
\[
I_A = \langle
b^{5}-d e^{5} f^{4},
a e^{2} f^{2}-b^{2} c,
a b^{3}-c d e^{3} f^{2},
a^{2} b-c^{2} d e,
a^{3} e f^{2}-b c^{3} d,
a^{5} f^{2}-c^{5} d^{2}
\rangle.
\]

For example, the binomial $a^{3} e f^{2}-b c^{3} d$ corresponds to the point $u = (1,1)$ in the Gale diagram,
since $Bu = (3, -1, -3, -1, 1,2)^T$.

\begin{figure}
\resizebox{!}{5cm}{\includegraphics{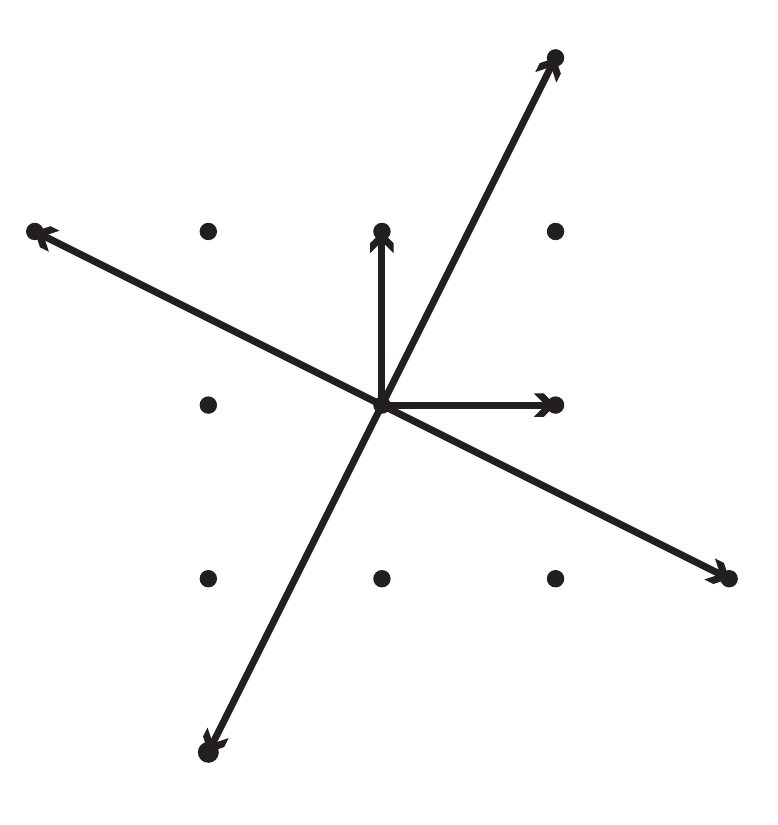}}
\caption{The reduced Gale transform of the matrix in Example \ref{ex:Gale}.  The dots represent the
points in the set $\mathcal{H}_A$. \label{fig:Gale}}
\end{figure}

\end{ex}

\section*{Acknowledgments}
Seth Sullivant was partially supported by the David and Lucille Packard 
Foundation and the US National Science Foundation (DMS 1615660).

\bibliographystyle{alpha}

\bibliography{robust}

\end{document}